\numberwithin{equation}{section}
\newtheorem{theorem}{Theorem}
\newtheorem{remark}{Remark}
\begin{document}

\title{A kernel-independent sum-of-Gaussians method by de la Vall\'ee-Poussin sums}

\author[1]{Jiuyang Liang}
\author[2]{Zixuan Gao}
\author[1,3]{Zhenli Xu\thanks{xuzl@sjtu.edu.cn}}
\affil[1]{School of Mathematical Sciences, Shanghai Jiao Tong University, Shanghai, 200240, P. R. China}
\affil[2]{Zhiyuan College, Shanghai Jiao Tong University, Shanghai 200240, China}
\affil[3]{Institute of Natural Sciences and MOE-LSC, Shanghai Jiao Tong University, Shanghai, 200240, P. R. China}

\date{}
\maketitle

\begin{abstract}
Approximation of interacting kernels by sum of Gaussians (SOG) is frequently required in many applications
of scientific and engineering computing in order to construct efficient algorithms for kernel summation or convolution problems. In this paper, we propose a kernel-independent SOG method by introducing the de la Vall\'ee-Poussin sum
and Chebyshev polynomials. The SOG works for general interacting kernels and the lower bound of Gaussian bandwidths  is tunable and thus the Gaussians can be easily summed by fast Gaussian algorithms.
The number of Gaussians can be further reduced via the model reduction based on the balanced truncation based on
the square root method. Numerical results on the accuracy and model reduction efficiency show attractive performance
of the proposed method.

{\bf Key words}. Sum-of-Gaussians approximation, interaction kernels, de la Vall\'ee-Poussin sums, model reduction

{\bf AMS subject classifications}. 65D15, 42A16, 70F10
\end{abstract}

 \section{Introduction}\label{sec:intro}
For a given smooth  function $f(x)$ for $x\in D$ with $D$ a finite interval and an error tolerance $\varepsilon$,
we consider the approximation of this function by the sum of Gaussians (SOG),
\begin{equation}\label{sog}
\max\limits_{x\in D}\left|f(x)-\sum\limits_j w_j e^{- t_j x^2}\right|<\varepsilon\max\limits_{x\in D}|f(x)|,
\end{equation}
where $w_j$ and $1/\sqrt{t_j}$ are the weight and the bandwidth of the $j$th Gaussian, respectively.
Over the past decades, the SOG approximation has attracted wide interest since it can be useful in many
applications of scientific computing such as convolution integral in physical space \cite{Beylkin2009Fast,Cerioni2012Efficient,Yong2018The}, the kernel summation
problem \cite{H1999A,Yarvin1999AN} and efficient nonreflecting boundary conditions for wave equations \cite{Alpertt2000RAPID,Jiang2004Fast,Jiang2008Efficient,Lubich2002Fast}.
Many kernel functions in these problems have the form of the radial function $f(x)=f(\|\bm{x}\|)$
for $\bm{x}\in \mathbb{R}^d$, such as the power kernel $\|\bm{x}\|^{-s}$ with $s>0$, the Hardy multiquadratic
$\sqrt{\|\bm{x}\|^2+s^2}$, the Mat\'ern kernel \cite{Chen2014Fast} and the thin-plate spline
$\|\bm{x}\|^2\log\|\bm{x}\|$. An SOG approximation to these kernels is particularly useful because
a Gaussian kernel can simply achieve the separation of variables such that a convolution of $f(x)$ with it can be computed as the summation of products of $d$ one-dimensional integrals, dramatically reducing the cost.

The solution of \eqref{sog} has been an extensively studied subject in literature \cite{Beylkin2005Approximation,Beylkin2010Approximation,Braess1995Asymptotics,Braess2009On,Dietrich2005Approximation,Evans1980On,articleFernandez,articleGonchar,David1979Least,Varah1985On,Wiscombe1977Exponential}. One way to construct the SOG expansion is via the best rational approximation or the sum-of-poles
approximation, which is based on the fact that the rational approximation for the Laplace transform
of function $f(x)$ has explicit expression, and the inverse transform gives the sum-of-exponentials
approximation. When the kernel is radially symmetric, the SOG approximation is
equivalent to the SOE approximation by a simple change of variable $y=\sqrt{x}$.
If one does not restrict the bound of the Gaussian bandwidths, the integral representation of the kernel
can be a significant tool. For example, the power function $x^{-s}$ has the following inverse Laplace
transform expression \cite{Beylkin2010Approximation},
\begin{equation}\label{powerfunction}
x^{-s}=\dfrac{1}{\Gamma(s)}\int_{-\infty}^{\infty}e^{-e^{t}x+s t}dt,
\end{equation}
with $\Gamma(\cdot)$ being the Gamma function. A suitable quadrature rule to Eq.\eqref{powerfunction}
yields an explicit discretization to obtain a sum of exponentials. Constructing the SOG approximation
from integral representation is superficially attractive due to the controllable high accuracy with the number of Gaussians.
The integral form for general kernels by the inverse Laplace transform can be found in Dietrich and Hackbusch \cite{Braess2009On}.

Another important approach for the SOG is the least squares approximation. A straightforward use
of the least squares method will be less accurate due to the difficulty of solving the ill-conditioned matrix.
The least squares problem can be solved by employing the divided-difference factorization and the modified
Gram-Schmidt method to significantly improve the accuracy \cite{Wiscombe1977Exponential}.
Greengard {\it et al.} \cite{Yong2018The} developed a black-box approach for the SOG of radially symmetric kernels. This approach allocates a set
of logarithmically equally spaced points $t_j$ lying on the positive real axis, then a set of sampling points $x_i$
is constructed via adaptive bisections. Then the fitting matrix $A$ with entry $A_{ij}=e^{-t_j x_i}$
and the right hand vector $b$ with $b_i=f(x_i)$ are constructed. After one obtains the weights by solving
the least squares problem, the square root method in model reduction \cite{GLOVER1984All} is
introduced to reduce the number of exponentials and achieve a near-optimal SOE approximation.

Function approximation using Gaussians is a highly nonlinear problem and the design of
such approximations for $t_j$ such that it is bounded by a positive number (i.e., the lower bound of  bandwidths is independent of the number of Gaussians) is nontrivial.
In applications with fast Gauss transform (FGT) \cite{Greengard1991The,ArticleRoussos}
to calculate the kernel summation problem with kernel approximated by the SOG, a small lower bound of
bandwidths will seriously reduce the performance of the algorithm. Due to this issue,
it was reported \cite{boyd2010uselessness} that the FGT is less useful for summing Gaussian radial basis function
series, in spite that Gaussian radial basis function interpolation
has been widely employed in many applications such as machine learning problems.
In this work, we propose a novel kernel-independent SOG method which preserves both high precision and
a tunable lower bound of bandwidths. We construct the Gaussian approximation of the kernel by
the de la Vall\'ee-Poussin (VP) sum \cite{de1919leccons,Natanson1965Constructive} via a variable substitution.
The variable substitution introduces a parameter $n_c$ which allows to tune the minimal bandwidth of
the Gaussians. Additionally, the model reduction can be further used to reduce the number of Gaussians under a specified accuracy level, achieving an optimized SOG approximation.
	
The remainder of the paper is organized as follows. In Section 2, we derive the kernel-independent
SOG approach and discuss the technique details. In Section 3, we perform numerical examples
which demonstrate the performance of the new SOG scheme for different kernels.
Concluding remarks are given in Section 4.

\section{Sum-of-Gaussians approximation}\label{SOGbyVPSum}
In this section, we consider the problem of approximating functions on a finite real interval by linear combination
of Gaussians, i.e., we approximate kernel function $f(x)$ by sum of $p$ Gaussians,
\begin{equation}\label{kernel_approx}
f_p(x)=\sum_{j=1}^p w_j e^{-x^2/s_j^2},
\end{equation}
where $w_j$ and $s_j$ are weights and bandwidths, respectively. We define $s_{p}=\min\limits_{j}|s_{j}|$ as the minimal bandwidth of the Gaussians.
%We assume $s_1>s_2>\cdots>s_p>0$, and thus $s_p$ is the minimal bandwidth of the Gaussians.

\subsection{de la Vall\'ee-Poussin sums}
We introduce the VP sum to obtain an SOG expansion. Let $f(x)$ be a smooth function defined
on the positive axis, $x\geq0$. We assume $f(x)$ has a limit at infinity.
Without loss of generality, we assume the limit is zero, $\lim_{x\rightarrow\infty}f(x)=0$.
We introduce the variable substitution
\begin{equation}\label{variable-convert}
x=\sqrt{-n_c\log\left(\dfrac{1+\cos t}{2}\right)}, ~t=[0,\pi],
\end{equation}
where $t\leftrightarrow x$ is an one-to-one mapping. Parameter $n_c$ is a positive constant, which determines
the specified lower bound of bandwidths.
Let $\varphi(t)=f(x)$, which is smooth on $[0,\pi]$. We have $\varphi(0)=f(0)$ and $\varphi(\pi)=f(\infty)=0$.
We make an even prolongation of $\varphi(t)$ on $[-\pi,\pi]$ such that $\varphi(t)$ can be treated as an even periodic
function with $2\pi$ period in $(-\infty,\infty)$.

The VP sum of $\varphi(t)$ is defined by (see \cite{Al2001Approximation} for reference),
\begin{equation}\label{Vallee}
V_n[\varphi(t)]=\dfrac{1}{n}\sum\limits_{\ell=n}^{2n-1}S_\ell[\varphi(t)],
\end{equation}
where
\begin{equation}
S_\ell[\varphi(t)]=\sum_{k=0}^{\ell}a_k\cos(k t),
\end{equation}
is the Fourier partial sum of $\varphi(t)$ and the Fourier coefficients $a_k$ are defined by,
\begin{equation}\label{a_alpha}
a_k=\begin{cases}
\dfrac{1}{\pi}\mathlarger{\int}_0^\pi \varphi(t)dt, ~~\text{for} ~k=0,\\\\
\dfrac{2}{\pi}\mathlarger{\int}_0^{\pi}\varphi(t)\cos(k t)dt, ~~\text{for} ~k\geq 1.
\end{cases}
\end{equation}
The VP sum \eqref{Vallee} can be reorganized into two components,
\begin{equation}\label{VS_TwoP}
V_n[\varphi(t)]=S_n[\varphi(t)]+\sum\limits_{\ell=1}^{n-1}\left(1-\dfrac{\ell}{n}\right)a_{n+\ell}\cos\left[(n+\ell)t\right].
\end{equation}
We substitute the inverse mapping $t=\arccos (2e^{-x^2/n_c}-1)$ to Eq. \eqref{VS_TwoP} and find,
\begin{equation}\label{Kn}
f_p(x)=\sum\limits_{\ell=0}^{n}a_\ell T_\ell\left(2e^{-x^2/n_c}-1\right)+\sum\limits_{\ell=1}^{n-1}\left(1-\dfrac{\ell}{n}\right)a_{n+\ell}T_{n+\ell}\left(2e^{-x^2/n_c}-1\right)
\end{equation}
where $f_p(x)=V_n[\varphi(t)]$ is an approximation of $f(x)$, and by Theorem \ref{vpc}
it is an SOG with $p=2n$. Here, $T_m(x)$ is the Chebyshev polynomial of order $m$ defined by,
\begin{equation}\label{Tn}
T_m(x)=\cos(m\arccos(x))=\sum\limits_{\ell=0}^{\lfloor m/2 \rfloor}(-1)^\ell\binom{m-\ell}{2\ell}x^{m-2\ell}(1-x^2)^\ell.
\end{equation}

By substituting Eq. \eqref{Tn} into Eq. \eqref{Kn} and rearranging the coefficients, we obtain the following Theorem \ref{vpc}.

\begin{theorem}\label{vpc}
Function $f_p(x)$ defined in Eq. \eqref{Kn} can be written as a sum of Gaussians,
	\begin{equation}\label{VPVPVP}
	f_p(x)=\sum_{j=0}^{2n-1}w_je^{-j x^2/n_c},
\end{equation}
with $p=2n$. Here, coefficient $w_j$ is given by,
	\begin{equation}\label{Expansion-coefficients}
	w_j=\begin{cases}	a_0+\sum\limits_{\ell=1}^n(-1)^\ell a_\ell+\sum\limits_{\ell=1}^{n-1}(-1)^{n+\ell}\left(1-\dfrac{\ell}{n}\right)a_{n+\ell},~~~~\text{\emph{for}}~j=0,\\\\
	2^{2j} \sum\limits_{\ell=j}^{n}(-1)^{\ell-j}\dfrac{\ell }{\ell+j}\mathlarger{\binom{\ell+j}{\ell-j}}a_\ell
	+\sum\limits_{\ell=1}^{n-1}c_n^{j\ell} a_{n+\ell},
	~~~~\text{\emph{for}}~1\leq j\leq n,\\\\	
	\sum\limits_{\ell=j-n}^{n-1}c_n^{j\ell} a_{n+l},~~~~\text{\emph{for}}~j>n,
	\end{cases}
	\end{equation}
with
	$$c_n^{j\ell}=(-1)^{n+\ell-j}\left(1-\dfrac{\ell}{n}\right)\dfrac{(n+\ell)}{n+\ell+j} \mathlarger{\binom{n+\ell+j}{n+\ell-j}}2^{2j}.$$
\end{theorem}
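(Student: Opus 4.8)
The plan is to exploit the single structural fact that makes \eqref{Kn} collapse into a Gaussian sum: after the substitution $u:=e^{-x^2/n_c}$, each term $T_m\!\left(2e^{-x^2/n_c}-1\right)=T_m(2u-1)$ is a polynomial of degree $m$ in $u$, and every monomial $u^j$ is exactly the Gaussian $e^{-jx^2/n_c}$. Hence $f_p$ is a polynomial in $u$; since the highest-degree Chebyshev polynomial appearing in \eqref{Kn} is $T_{n+\ell}$ with $\ell=n-1$, namely $T_{2n-1}$, this polynomial has degree $2n-1$, so $f_p(x)=\sum_{j=0}^{2n-1}w_j u^j=\sum_{j=0}^{2n-1}w_j e^{-jx^2/n_c}$, which is precisely \eqref{VPVPVP} with $p=2n$. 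It then remains only to identify the coefficients $w_j$.

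The central computation is the expansion of the shifted Chebyshev polynomial $T_m(2u-1)$ in powers of $u$. First I would substitute $x=2u-1$ into the explicit representation \eqref{Tn}, using $1-x^2=1-(2u-1)^2=4u(1-u)$ and expanding the factors $(2u-1)^{m-2\ell}$ and $(1-u)^{\ell}$ by the binomial theorem. Collecting the coefficient of $u^j$ then produces a finite double sum of products of binomial coefficients, and the main obstacle is to reduce this to the closed form
\begin{equation}\label{shiftedcheb}
[u^j]\,T_m(2u-1)=(-1)^{m-j}\frac{m}{m+j}\binom{m+j}{m-j}2^{2j},\qquad m\ge1,
\end{equation}
the known power-series expansion of the shifted Chebyshev polynomial. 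The reduction is a binomial-coefficient identity and is where the bookkeeping is heaviest; as a sanity check, the $j=0$ entry of \eqref{shiftedcheb} gives $(-1)^m$, consistent with $T_m(-1)=\cos(m\pi)$, and $m=1$ gives $T_1(2u-1)=2u-1$. An alternative and likely cleaner route is to prove \eqref{shiftedcheb} by induction on $m$ from the three-term recurrence $T_{m+1}(2u-1)=2(2u-1)T_m(2u-1)-T_{m-1}(2u-1)$: comparing coefficients of $u^j$ yields $C_{m+1,j}=4C_{m,j-1}-2C_{m,j}-C_{m-1,j}$ for $C_{m,j}:=[u^j]T_m(2u-1)$, and one checks directly that the right-hand side of \eqref{shiftedcheb} satisfies this recurrence with the correct initial data. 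I would keep the inductive argument as a fallback should the direct reduction prove unwieldy.

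With \eqref{shiftedcheb} in hand, the final step is pure bookkeeping: read off $w_j$ as the total coefficient of $u^j$ across the two groups of \eqref{Kn}, tracking which terms can contribute. In the first group $\sum_{\ell=0}^{n}a_\ell T_\ell(2u-1)$, only indices $\ell\ge j$ reach degree $u^j$, so for $j\ge1$ it contributes $2^{2j}\sum_{\ell=j}^{n}(-1)^{\ell-j}\frac{\ell}{\ell+j}\binom{\ell+j}{\ell-j}a_\ell$, while for $j=0$ it gives $a_0+\sum_{\ell=1}^{n}(-1)^{\ell}a_\ell$ (using $T_0=1$). In the second group $\sum_{\ell=1}^{n-1}\left(1-\frac{\ell}{n}\right)a_{n+\ell}T_{n+\ell}(2u-1)$, the term indexed by $\ell$ reaches degree $u^j$ iff $n+\ell\ge j$; applying \eqref{shiftedcheb} with $m=n+\ell$ produces exactly the stated weight $c_n^{j\ell}$, and the constraint $n+\ell\ge j$ fixes the summation range — all of $\ell=1,\dots,n-1$ when $j\le n$, and $\ell\ge j-n$ when $j>n$ (where the first group no longer contributes, since $\deg T_\ell\le n<j$). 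Splitting into the three regimes $j=0$, $1\le j\le n$, and $j>n$ then reproduces \eqref{Expansion-coefficients} term by term, completing the proof.
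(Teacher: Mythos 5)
Your proposal is correct and follows essentially the same route as the paper: the paper's entire proof is the one-line remark that Theorem~\ref{vpc} follows ``by substituting Eq.~\eqref{Tn} into Eq.~\eqref{Kn} and rearranging the coefficients,'' and your argument---substituting $u=e^{-x^2/n_c}$, expanding each $T_m(2u-1)$ in powers of $u$ via the shifted-Chebyshev coefficient formula $[u^j]\,T_m(2u-1)=(-1)^{m-j}\frac{m}{m+j}\binom{m+j}{m-j}2^{2j}$, and collecting terms in the three regimes $j=0$, $1\le j\le n$, $j>n$---is exactly that computation carried out in detail, with correct handling of the $T_0$ term and of the degree constraints that fix the summation ranges.
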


Eq. \eqref{VPVPVP} gives the expression of the SOG approximation with the bandwidth of $j$th Gaussian being
$s_j=\sqrt{n_c/j}$ for $j>0$.  Note that the only approximation
introduced in the SOG method is the numerical calculation of the
Fourier coefficients Eq. \eqref{a_alpha}, and the fast cosine transform can be employed for
rapid evaluation of these coefficients.

\begin{remark}
The minimal bandwidth in the Gaussians is $s_p=\sqrt{n_c/(2n-1)}$, thus $n_c$ determines the
lower bound of all bandwidths. It asymptotically becomes constant if one sets $n_c\varpropto n$.
\end{remark}

\subsection{Error estimate}
We discuss the error estimate of the VP sum $V_n[\varphi(t)]$ to approximate $\varphi(t)$.
We assume that $\varphi(t)$ is twice continuously differentiable in $[-\pi,\pi]$ except at $t=0$,
and analyze the errors when the function is not differentiable and first-order differentiable at $t=0$,
respectively.

When $\varphi(t)$ is not differentiable at $t=0$, it was shown that $V_n[\varphi(t)]$ still converges to $\varphi(t)$ uniformly
on $\mathbb{R}$, but the rate of convergence at $t=0$ is much slower than that at any other point. In this case, the error estimate of the VP sum was given in Boyer and Goh \cite{boyer2011generalized},
and it is not difficult to follow the estimate to obtain the following result,
\begin{equation} \label{case1}
V_n[\varphi(0)]-\varphi(0)=-\dfrac{\ln 2}{n\pi}\sqrt{n_c}f'(0)+O\left(n^{-\frac{3}{2}}\right).
\end{equation}

We now consider the case of $\varphi(t)$ being first-order differentiable at $t=0$, and there is
$f'(0)=0$ since $f(x)$ is even. Many radial basis functions satisfy the condition $f'(0)=0$ such as the inverse multiquadratic kernel
and the Mat\'ern kernel with $\nu\geq1$. In this case, the leading term in \eqref{case1} vanishes, and the
error order is higher. Actually, Theorem \ref{estimate} shows that the error is the second order of
convergence with respect to $1/n$, instead of $O(n^{-3/2})$.

\begin{theorem}\label{estimate}
	Suppose that $V_n[\varphi(t)]$ is the $n$th VP sum of $\varphi(t)$ which is twice-differentiable on $[0,\pi]$
	with period $2\pi$ and defined through Eq. \eqref{variable-convert} by $f(x)$. If $f'(0)=0$, then we have
	\begin{eqnarray} \label{order}
	&&V_n[\varphi(0)]-\varphi(0)=O\left(n^{-2}\right), ~~\hbox{and}, \\
	&&V_n[\varphi(t)]-\varphi(t)=o\left(n^{-2}\right), ~~\hbox{for}~t\neq 0.
	\end{eqnarray}
\end{theorem}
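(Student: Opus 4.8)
The plan is to work directly with the Fourier (cosine) coefficients $a_k$ of $\varphi$ and to read off both rates from a single exact error formula. Subtracting the cosine series $\varphi(t)=\sum_{k\ge0}a_k\cos(kt)$ from the two-component form \eqref{VS_TwoP} of the VP sum, everything below index $n$ cancels and one is left with
\begin{equation*}
V_n[\varphi](t)-\varphi(t)=-\sum_{\ell=1}^{n-1}\frac{\ell}{n}\,a_{n+\ell}\cos\big((n+\ell)t\big)-\sum_{k=2n}^{\infty}a_k\cos(kt).
\end{equation*}
This is the object to estimate, and the entire theorem amounts to understanding how these two right-hand sums behave at $t=0$ as opposed to $t\neq0$.

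The key quantitative input is the decay estimate $a_k=O(k^{-3})$. To obtain it I would first record that the hypotheses force the odd-order boundary data to vanish: from the substitution \eqref{variable-convert} one has $x'(0^+)=\tfrac12\sqrt{n_c}$, so $\varphi'(0^+)=f'(0)\,x'(0^+)=0$, while the even $2\pi$-periodic extension is also symmetric about $t=\pi$, whence $\varphi'(\pi)=0$. Integrating $a_k=\tfrac2\pi\int_0^\pi\varphi(t)\cos(kt)\,dt$ by parts twice, the boundary contributions involve only $\varphi'(0)$ and $\varphi'(\pi)$ and therefore drop out, leaving $a_k=-\tfrac{2}{\pi k^2}\int_0^\pi\varphi''(t)\cos(kt)\,dt$. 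A further Stieltjes integration by parts, using that $\varphi''$ has bounded variation on $[0,\pi]$, gives $\big|\int_0^\pi\varphi''\cos(kt)\,dt\big|\le \mathrm{Var}(\varphi'')/k$, and hence $a_k=O(k^{-3})$. This is exactly the strength needed below: a mere $|a_k|=O(k^{-2})$ would only yield an $O(n^{-1})$ bound at $t=0$.

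With $|a_k|\le Mk^{-3}$ the two cases separate cleanly. At $t=0$ every cosine equals $1$, so no cancellation is available and I simply bound
\begin{equation*}
\big|V_n[\varphi](0)-\varphi(0)\big|\le M\sum_{\ell=1}^{n-1}\frac{\ell}{n}(n+\ell)^{-3}+M\sum_{k=2n}^{\infty}k^{-3}=O(n^{-2}),
\end{equation*}
the tail $\sum_{k\ge2n}k^{-3}$ already being $\Theta(n^{-2})$; this is the first assertion. For fixed $t\neq0$ the factor $\cos(kt)$ supplies the missing cancellation: the partial sums $\sum_{k\le N}\cos(kt)$ are bounded by $1/|\sin(t/2)|$ uniformly in $N$, and the coefficient sequences $\{a_k\}$ and $\{\tfrac{\ell}{n}a_{n+\ell}\}$ are of bounded variation and $O(n^{-3})$. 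A summation by parts (Dirichlet test) then bounds each tail by a constant multiple of $n^{-3}/|\sin(t/2)|$, so $V_n[\varphi](t)-\varphi(t)=O(n^{-3})=o(n^{-2})$, which is the second assertion.

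The main obstacle I anticipate is not the summation arguments but securing the decay $a_k=O(k^{-3})$ uniformly — concretely, justifying that $\varphi$ is genuinely twice differentiable with $\varphi''\in\mathrm{BV}$ up to and including the endpoint $t=\pi$. Under \eqref{variable-convert} the entire infinite tail $x\to\infty$ of the kernel is compressed into a one-sided neighbourhood of $t=\pi$, so the regularity of $\varphi$ there — and thus the vanishing of $\varphi'(\pi)$ together with the bounded variation of $\varphi''$ — is controlled entirely by the decay rate of $f$ at infinity; verifying the hypothesis for a given kernel, and in particular checking that no slower-than-$k^{-3}$ contribution is produced at $t=\pi$, is the delicate step. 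By contrast the endpoint $t=0$ is harmless once $f'(0)=0$, which is precisely why that single point remains the slowest point of convergence and carries the genuine $O(n^{-2})$ rate.
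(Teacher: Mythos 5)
Your route is genuinely different from the paper's: the paper never looks at coefficient decay, but instead writes $V_n[\varphi]=2\sigma_{2n}[\varphi]-\sigma_n[\varphi]$, invokes the Fej\'er-kernel integral representation, and estimates the resulting integral near and away from $\xi=0$ via mean value theorems and integration by parts. Your exact error identity
\begin{equation*}
V_n[\varphi](t)-\varphi(t)=-\sum_{\ell=1}^{n-1}\frac{\ell}{n}\,a_{n+\ell}\cos\big((n+\ell)t\big)-\sum_{k=2n}^{\infty}a_k\cos(kt)
\end{equation*}
is correct, the vanishing of the boundary terms $\varphi'(0)$, $\varphi'(\pi)$ is correctly argued, and granted $|a_k|\le Mk^{-3}$ your $t=0$ estimate is sound. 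But note what that decay costs: it requires $\varphi''\in\mathrm{BV}$, which is strictly stronger than the theorem's hypothesis of twice differentiability. Under the hypothesis as stated, integration by parts plus Riemann--Lebesgue gives only $a_k=o(k^{-2})$, and your scheme then yields only $o(n^{-1})$ at $t=0$. You flag this honestly, but it means your argument proves the theorem only under a strengthened assumption, whereas the paper's kernel-side proof needs no BV condition.

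The genuine gap is the $t\neq0$ case. Abel summation bounds each tail by $\frac{1}{|\sin(t/2)|}$ times the sum of the boundary terms and the \emph{tail variation} $\sum_{k\ge 2n}|a_{k+1}-a_k|$. Only for monotone null sequences does the tail variation telescope to the size of the leading term; your $a_k$ oscillate in sign, and ``of bounded variation and $O(n^{-3})$'' does not imply tail variation $O(n^{-3})$. Concretely, writing $a_k=\frac{2}{\pi k^3}c_k$ with $c_k=\int_0^\pi\sin(ks)\,d\varphi''(s)$, one has $|c_k|\le\mathrm{Var}(\varphi'')$ but the differences $c_{k+1}-c_k$ need not decay at all (e.g.\ when $d\varphi''$ is singular continuous), so $|a_{k+1}-a_k|=O(k^{-3})$ only; hence $\sum_{k\ge2n}|a_{k+1}-a_k|=O(n^{-2})$, and your argument delivers $O(n^{-2})$ at $t\neq0$ --- the same bound you already have at $t=0$, not the asserted $o(n^{-2})$, and certainly not $O(n^{-3})$. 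So the second claim of the theorem is not established as written. A repair within your framework: interchange the sums with the Stieltjes integral, use the two-sided kernel bound $\big|\sum_{k\ge N}k^{-3}\sin(k\theta)\big|\le C\min\big(|\theta|/N,\,1/(N^3|\theta|)\big)$, and exploit that $d\varphi''$ has no atoms ($\varphi''$ is a derivative, hence has the Darboux property, and a Darboux BV function is continuous); splitting the integration near and away from $s=\pm t$ then gives $o(n^{-2})$ for fixed $t\neq0$. Alternatively, one can follow the paper's Fej\'er-kernel route, in which the $t\neq0$ case parallels the $t=0$ computation.
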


\begin{proof}
	We have $S_n[\varphi(t)]$ and $V_n[\varphi(t)]$ denoting the $n$th Fourier partial sum and the VP sum
	of $\varphi(t)$, respectively. Introduce the Fej\'er partial sum
	\begin{equation}
	\sigma_n[\varphi(t)]=\dfrac{1}{n+1}\sum\limits_{\ell=0}^{n}S_\ell[\varphi(t)].
	\end{equation}
	The error of the VP sum can be written as
	\begin{equation}\label{eq1}
	V_n[\varphi(t)]-\varphi(t)=2\sigma_{2n}[\varphi(t)]-\sigma_{n}[\varphi(t)]-\varphi(t).
	\end{equation}
	By using the Fej\'er kernel representation \cite{Natanson1965Constructive}, one has,
	\begin{equation}\label{eq2}
	 \sigma_n[\varphi(t)]-\varphi(t)=\dfrac{1}{n\pi}\int_{-\pi}^{\pi}\big[\varphi(t+\xi)-\varphi(t)\big]\dfrac{\sin^2\frac{n\xi}{2}}{2\sin^2\frac{\xi}{2}}d\xi.
	\end{equation}
	Substituting Eq.\eqref{eq2} into Eq.\eqref{eq1}, one gets,
	\begin{equation}\label{eq3}
	V_n[\varphi(t)]-\varphi(t)=\dfrac{1}{n\pi}\int_{-\pi}^{\pi}\big[\varphi(t+\xi)-\varphi(t)\big]\dfrac{\cos n\xi-\cos 2n\xi}{4\sin^2\frac{\xi}{2}}d\xi.
	\end{equation}
	
	Consider the case of $t=0$. Eq.\eqref{eq3} can be decomposed into two parts $I_1$ and $I_2$ with
	\begin{equation}
	I_1=\dfrac{1}{n\pi}\int_0^\pi(\varphi(\xi)-\varphi(0))\dfrac{\cos n\xi-\cos 2n\xi}{4\sin^2\frac{\xi}{2}}d\xi
	\end{equation}
	and
	\begin{equation}
	I_2=\dfrac{1}{n\pi}\int_{-\pi}^0(\varphi(\xi)-\varphi(0))\dfrac{\cos n\xi-\cos 2n\xi}{4\sin^2\frac{\xi}{2}}d\xi.
	\end{equation}
	
First, we focus on $I_1$ and write it as  $I_1=I_{11}+I_{12}$ such that,
	\begin{equation} \label{i11}
	I_{11}=\dfrac{1}{n\pi}\int_0^\pi(\varphi(\xi)-\varphi(0))(\cos n\xi-\cos 2n\xi)\left(\dfrac{1}{4\sin^2\frac{\xi}{2}}-\dfrac{1}{\xi^2}\right)d\xi
	\end{equation}
	and
	\begin{equation}
	I_{12}=\dfrac{1}{n\pi}\int_0^\pi(\varphi(\xi)-\varphi(0))(\cos n\xi-\cos 2n\xi)\dfrac{1}{\xi^2}d\xi.
	\end{equation}
In Eq. \eqref{i11}, $1/4\sin^2(\xi/2)-1/\xi^2$ remains positive between $0.08$ and $0.15$ for $\xi\in[0,\pi]$, and each part of the integrand in $I_{11}$ is continuous on closed interval $[0,\pi]$. By the first mean value theorem for integrals, there exists a positive number $M_1$ such that
	\begin{equation}
	I_{11}=\dfrac{M_1}{n\pi}\int_0^\pi(\varphi(\xi)-\varphi(0))(\cos n\xi-\cos 2n\xi)d\xi.
	\end{equation}
By integration by parts, one gets
	\begin{equation}\label{eq4}
	\begin{split}
	I_{11}=&\dfrac{M_1}{n\pi}\left[\left(\dfrac{1}{n}\sin n\xi-\dfrac{1}{2n}\sin 2n\xi\right)\dfrac{\varphi(\xi)-\varphi(0)}{\xi}\Bigg|_{\xi=0}^{\pi}\right]-\\
	&\int_0^\pi\dfrac{d\frac{\varphi(\xi)-\varphi(0)}{\xi}}{d\xi}\left(\dfrac{1}{n}\sin n\xi-\dfrac{1}{2n}\sin 2n\xi\right)d\xi\\
	=&\dfrac{M_1}{n^2\pi}\int_0^\pi\dfrac{d\frac{\varphi(\xi)-\varphi(0)}{\xi}}{d\xi}\left(\sin n\xi-\dfrac{1}{2}\sin 2n\xi\right)d\xi\\
	=&O\left(\dfrac{1}{n^2}\right),
	\end{split}
	\end{equation}
	where the last two steps employ the conditions that $\varphi'(0)=0$ and $\varphi''(0)$ exists.
	
For $I_{12}$, one can decompose $I_{12}$ into two parts $I_{121}$ and $I_{122}$ such that,
	\begin{equation}
	I_{121}=\dfrac{1}{n\pi}\int^{\frac{1}{n}}_0(\varphi(\xi)-\varphi(0))(\cos n\xi-\cos 2n\xi)\dfrac{1}{\xi^2}d\xi,
	\end{equation}
	and
	\begin{equation}
	I_{122}=\dfrac{1}{n\pi}\int_{\frac{1}{n}}^\pi(\varphi(\xi)-\varphi(0))(\cos n\xi-\cos 2n\xi)\dfrac{1}{\xi^2}d\xi.
	\end{equation}
Note that $\cos n\xi-\cos 2n\xi$ is a monotonically increasing function which is non-negative on $[0, 1/n]$. Due to the existence of $f''(t)$, the rest part of the integrand of $I_{121}$ is integrable and bounded. One employs the second mean value theorem for integrals to $I_{121}$, and finds that there exists a positive $M_2\leq 1/n$ such that
	\begin{equation}
	I_{121}=\dfrac{1}{n\pi}(\cos 1 -\cos 2)\int^{\frac{1}{n}}_{M_2}\dfrac{\varphi(\xi)-\varphi(0)}{\xi^2}d\xi=O\left(\dfrac{1}{n^2}\right).
	\end{equation}
Note that $[\varphi(\xi)-\varphi(0)]/\xi^2$ is bounded on $[1/n,\pi]$. There exists a positive number $M_3$ such that
	
	\begin{equation}
	|I_{122}|\leq\dfrac{M_3}{n\pi}\Bigg|\int_{\frac{1}{n}}^{\pi}(\cos n\xi-\cos 2n\xi)d\xi\Bigg|=\dfrac{M_3(2\sin 1-\sin 2)}{2n^2\pi}=O\left(\dfrac{1}{n^2}\right).
	\end{equation}
	
Finally, combining these estimates, one gets,
	\begin{equation}
	I_1=I_{11}+I_{12}=I_{11}+I_{121}+I_{122}=O\left(\dfrac{1}{n^2}\right).
	\end{equation}
Similarly, it holds,
	\begin{equation}
	I_2=O\left(\dfrac{1}{n^2}\right).
	\end{equation}
Substituting these estimates for $I_1$ and $I_2$ into Eq. \eqref{eq3} completes the proof of the first equality in Eq. \eqref{order}. For the case $t\neq 0$, the proof is similar and we omit the details.
	
\end{proof}

\begin{remark}
If the smooth function $f(x)$ has a limit at infinity, our method works on the whole positive-axis, thus the interval $D$ could be an arbitrary subset of $\mathbb{R}^+$. If the limit doesn't exist, the above approach of constructing the SOG expansion has low accuracy because of the discontinuity of the transformed function $\varphi(t)$. In order to achieve higher accuracy, one could truncate $f(x)$ at a required point, then connect a fast decreasing function behind the cutoff point such that a new function $f^*(x)$ to localize the kernel function. The higher-order differentiable properties of $f^*(x)$ can be kept.
\end{remark}

\subsection{Model reduction method}
In this section, we consider to reduce the number of Gaussians in the SOG expansion.
We apply the square root method in model reduction \cite{GLOVER1984All,Moore1981Principal}
for the purpose, which can achieve a near optimal approximation. That is, we find
a $q$-term Gaussians for Eq.\eqref{VPVPVP} such that
	\begin{equation}\label{mr_1}
	\sum_{j=1}^{2n-1}w_je^{-j x^2/n_c}\approx \sum_{\ell=1}^q \widetilde{w}_\ell e^{-x^2/s_{\ell}^2},
	\end{equation}
where $q<2n-1$ and the minimal bandwidth $s_{q}=\min\limits_{\ell}|s_\ell|\approx \sqrt{n_c/(2n-1)}$.
Note that the constant $j=0$ term is ignored here.

Let $y=x^2$. The model reduction procedure first writes the left hand side of
Eq. \eqref{mr_1} (excluding $j=0$) into the SOE expansion with parameter $y$, and then
introduces the Laplace transform on it to obtain a sum-of-poles representation,
\begin{equation}\label{s-o-p}
		\mathscr{L} \left[\sum_{j=1}^{2n-1}w_je^{-jy/n_c}\right] =\sum\limits_{j=1}^{2n-1}\dfrac{w_j}{z+j/n_c}.
\end{equation}
The sum-of-poles representation in Eq.\eqref{s-o-p} can be simply expressed as
the following transfer function in the linear dynamical system problem,
\begin{equation}
		\bm{c}(z\bm{I}-\bm{A})^{-1}\bm{b}=\sum\limits_{j=1}^{2n-1}\dfrac{w_j}{z+j/n_c},
\end{equation}
where $\bm{A}$ is a diagonal matrix, $\bm{b}$ and $\bm{c}$ are column and row vectors, respectively and they reads,
\begin{equation}\label{coef}
		\begin{split}		
&\bm{A} =-\text{diag}\left\{\frac{1}{n_c},\frac{2}{n_c},\cdots,\frac{(2n-1)}{n_c}\right\},\\
&\bm{b} =\left(\sqrt{|w_1|},~\sqrt{|w_2|},~\cdots,~\sqrt{|w_{2n-1}|}\right)^{T},\\
&\bm{c} =\left(\text{sign}(w_1) \sqrt{|w_1|},~\text{sign}(w_2) \sqrt{|w_2|},~\cdots,
~\text{sign}(w_{2n-1}) \sqrt{|w_{2n-1}|}\right).
		\end{split}
\end{equation}
		
With the transfer function, the associated dynamical system with coefficients given by Eq.\eqref{coef}
can be reduced by the balanced truncation \cite{Moore1981Principal} where the square root method
can be used for the purpose. This technique can be simply generalized to reduce the number of poles
and thus the Gaussians in the SOG. The first step of the algorithm is solving
two Lyapunov equations for two matrices $\bm{P}$ and $\bm{Q}$,
\begin{equation}
	\bm{AP}+\bm{PA}^{*}+\bm{b}\bm{b}^{*}=0,~~\bm{A}^{*}\bm{Q}+\bm{QA}+\bm{c}^{*}\bm{c}=0,
\end{equation}
where $*$ indicates the conjugate transpose. The second step is to find a balancing transformation matrix $\bm{X}$ by employing the square root method such that the singular value of product $\bm{PQ}$ 
can be calculated. These procedures lead to a reduced system with coefficients $\widetilde{\bm{A}}^{q\times q}$, $\widetilde{\bm{b}}^{q\times 1}$
and $\widetilde{\bm{c}}^{1\times q}$, which are defined as the $q\times q$, $q\times 1$, $1\times q$ leading blocks of $\bm{XAX}^{-1}$, $\bm{Xb}$, and $\bm{cX}^{-1}$, respectively. The corresponding transfer function
$\widetilde{\bm{c}}(z\widetilde{\bm{I}}-\widetilde{\bm{A}})^{-1}\widetilde{\bm{b}}$ satisfies
		\begin{equation}		 \sup\limits_{z=i\mathbb{R}}\left|\widetilde{\bm{c}}(z\widetilde{\bm{I}}-\widetilde{\bm{A}})^{-1}\widetilde{\bm{b}}-\bm{c}(z\bm{I}-\bm{A})^{-1}\bm{b}\right|\leq \delta,
		\end{equation}
for a given tolerance $\delta$. By employing the eigendecomposition and the inverse Laplace transform,
one accomplishes the model reduction procedure, resulting in an optimized SOG approximation with $q$
Gaussians Eq. \eqref{mr_1}.

We remark that the model reduction technique was originally designed for sum-of-poles approximation
and the optimality of the resulting SOE approximation in the $L^\infty$ norm is guaranteed
by well-known results in control theory \cite{GLOVER1984All,Xu2013A}. However, since all the nodes lie
in the left half of the complex plane, we are allowed to apply it directly to the reduction of SOG approximation
because of the aforementioned connection between these two types of approximations \cite{Yong2018The}.
The implementation of model reduction can also be obtained by employing the balance of Moore or the orthogonal-diagonal approach \cite{Moore1981Principal,GUGERCIN2009SIAM}.

\section{Numerical results}
In this section, we present numerical results to illustrate the performance of the SOG approximation method developed in this paper. Due to the requirement of high-precision matrix manipulation, we employ the Multiple Precision Toolbox \cite{MP} in order to implement the model reduction procedure. Surprisingly, We find the parameters of Gaussians after the model reduction can be well represented by double-precision floating point numbers. The computer code is released as open source, which is available at the link https://github.com/ZXGao97. All the calculations are performed on a Intel TM core of clock rate $2.50$ GHz with $24$ GB of memory.

Four different kernels are used to measure the performance of the algorithm.
These are the Gaussian kernel $f_\mathrm{gau}$ which has a small bandwidth, the inverse multiquadratic kernel $f_\mathrm{imq}$,
the Ewald splitting kernel $f_\mathrm{ewd}$ and the Mat\'ern kernel $f_\mathrm{mat}$, expressed as follows,
\begin{eqnarray}
&\displaystyle f_\mathrm{gau}(x)=e^{-x^2/h^2}, \\
&\displaystyle f_\mathrm{imq}(x)=\frac{1}{\sqrt{1/2+x^2}}, \\
&\displaystyle f_\mathrm{ewd}(x)=\frac{\text{erf}(\alpha x)}{x}, \\
&\displaystyle f_\mathrm{mat}(x)=\dfrac{(\sqrt{2\nu}|x|)^{\nu}K_\nu(\sqrt{2\nu}|x|)}{2^{\nu-1}\Gamma(\nu)},
\end{eqnarray}
where $\text{erf}(x)=(2/\sqrt{\pi})\int_0^x \exp(-u^2)du$ is the error function, $K_\nu$ is the modified Bessel function of
the second kind of order $\nu$ and $\Gamma$ is the Gamma function. We take parameters $h=0.1$, $\alpha=1$ and $\nu=2$ in the
calculations. We remark that the Gaussian and the inverse multiquadratic kernels are mostly used radial basis functions
which have been used in a broad range of data science and engineering problems \cite{Hu1998The,Scholkopf2002Comparing}.
The Ewald splitting kernel $f_\mathrm{ewd}$ is the long-range part of the well-known Ewald summation \cite{Darden1993Particle,EwaldDie,JinLiXuZhao2020} for
Coulomb interaction and the parameter $\alpha$ describes the inverse of cutoff radius.
Lastly, the Mat\'ern kernel is also a radial basis function and often used as a covariance function in
modeling Gaussian process \cite{Chen2014Fast}, where the parameter $\nu$ describes the smoothness of the kernel.

We begin with the performance of the SOG to approximate the exact kernels with
the increase of $p$. To assess the accuracy, we compute the maximal
relative error $\epsilon_\infty$ of the resulted SOG approximation $f_p(x)$ with $p=2n$
in the case of without the model reduction, which is defined by,
\begin{equation}\label{MaximalError}
\epsilon_\infty =\dfrac{\max \big\{|f_p(x_i)-f(x_i)|,i=1,\cdots,M\big\} }{\max\big\{|f(x_i)|, i=1,\cdots,M\big\}},
\end{equation}
where $\{x_i, i=1,\cdots, M\}$ are monitoring points randomly distributed from $[0,1]$ and
we take $M=1000$. The error can be viewed as a scaling approximation of the continuous $L^\infty$ norm.

The results are given in Figure \ref{TheFirstFigure}. In panel (a), we
show the maximal relative errors for the four kernels with the increase of $p$.
The parameter $n_c$ is set to be
$n_c=\lceil n/4\rceil$, and thus the minimal bandwidth is fixed to be $s_p\approx \sqrt{1/8}$, asymptotically
independent of $n$.
We observe that high accuracy of the SOG approximation is achieved for all four kernels and
that the convergence rate is very promising. It is mentioned that the Gaussian kernel
$f_\mathrm{gau}$ has very small bandwidth $h=0.1$, but the SOG with all $s_j\gg h$ has
rapid convergence when $p$ is bigger than 100. In panel (b), the maximal relative error
is shown as a function of the minimal bandwidth, where the total number of Gaussians is fixed to
be $p=10000$ and the value $n_c$ is varying to tune the minimal bandwidth $s_p$. For
the small Gaussian and the Mat\'ern kernels, the observed accuracy
is significantly improved when the minimal bandwidth is reduced. Whereas, the SOG approximation for
the other two kernels seems not sensitive to the varying of minimal bandwidth since they are smoother functions
and the high-frequency components in Fourier space are very small.

\begin{figure}[!htbp]
	\centering
		\includegraphics[width=0.49\linewidth]{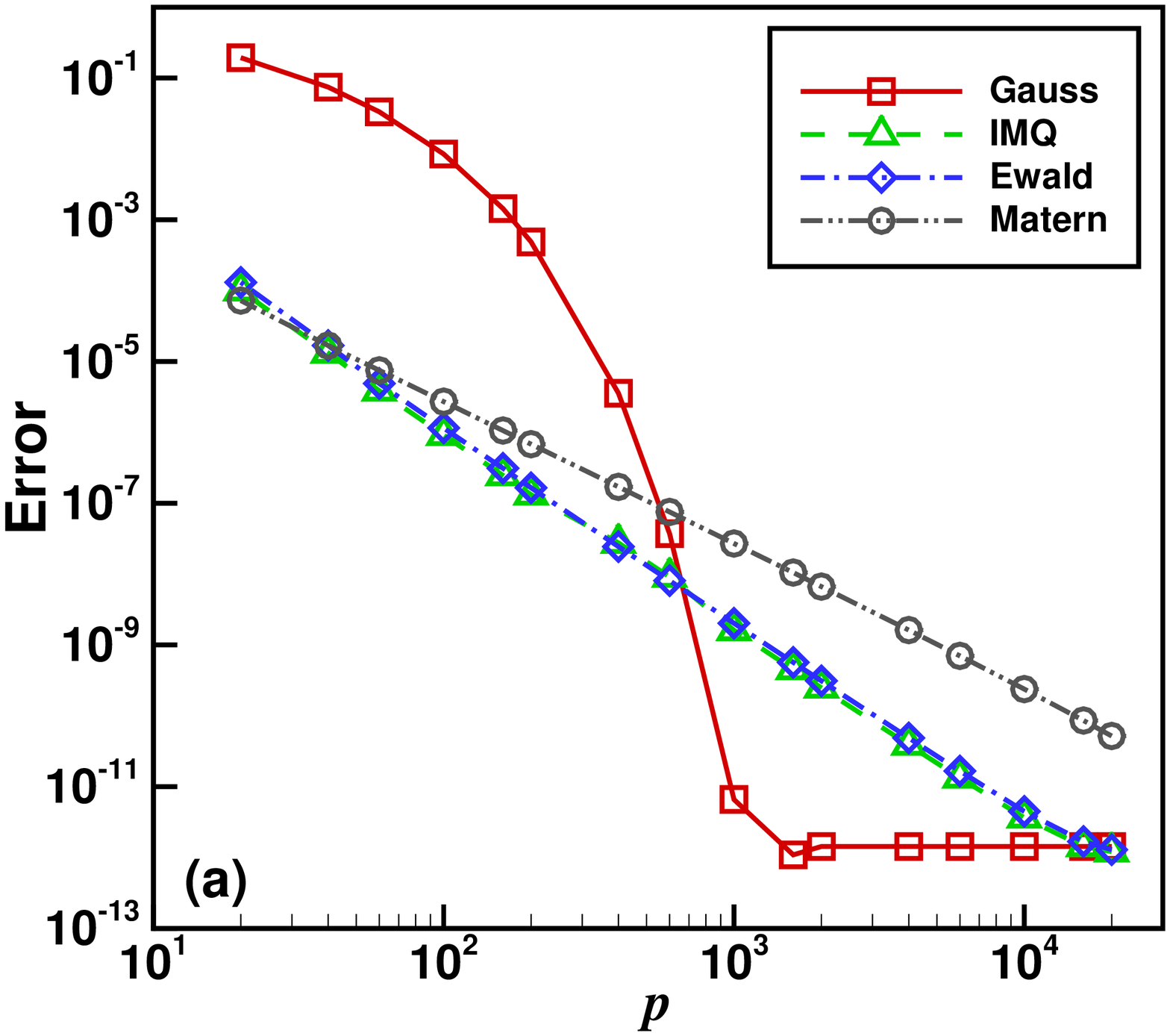}
        \includegraphics[width=0.49\linewidth]{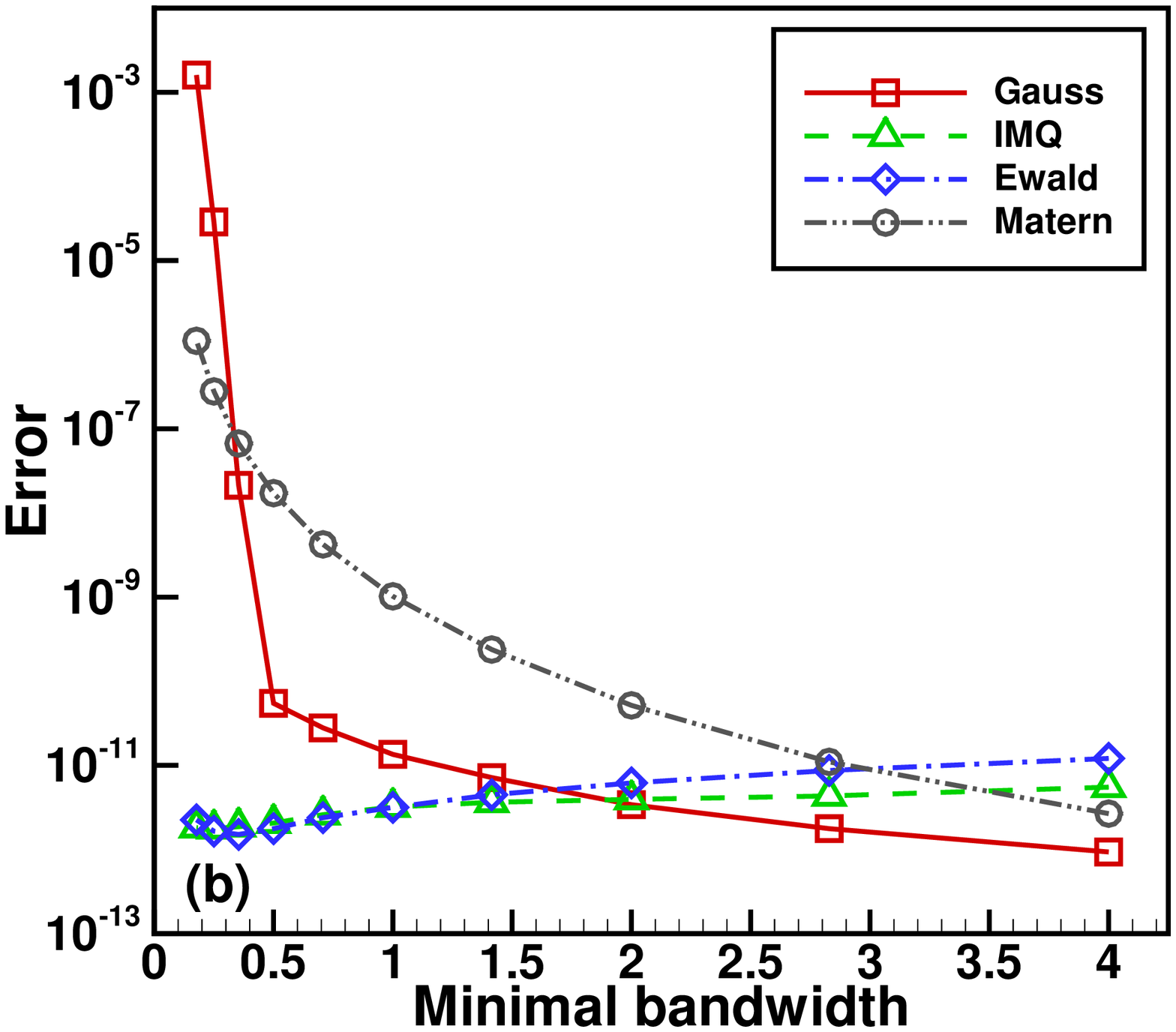}
	\caption{ Dependence of the maximal relative error $\epsilon_\infty$ of the SOG approximation as: (a)
function of $p$ with fixed minimal bandwidth; and (b) function of the minimal bandwidth with fixed number
of Gaussians. Data is shown for four kernels: the small Gaussian (Gauss),
the inverse multiquadric (IMQ), the Ewald splitting (Ewald) and the Mat\'ern kernel (Matern).	}
	\label{TheFirstFigure}
\end{figure}

In practical calculations, the magnitude of the coefficients of Gaussians has close relation to the round-off error.
It is necessary to check the effect of varying minimal bandwidth on the coefficient magnitudes.
Figure \ref{TheSecondFigure} shows the maximal absolute value of the coefficients (maximum weight),
$w_\mathrm{max}=\max \{ |w_j|, j=0,1,\cdots,p-1 \}$, of the SOG expansion as function of minimal bandwidth $s_p$.
The two panels shows results of  fixed numbers of Gaussians, $p=20$ and $40$, respectively.
The relation between $w_\mathrm{max}$ and the minimal bandwidth is substantially different for these kernels.
Generally, as can be observed in Figure \ref{TheSecondFigure} (ab), the maximal coefficient magnitude
increases with the number of Gaussians used for the SOG approximation. Within the calculated range of $s_p$,
the maximum weight varies in about 3 number of digits for both $p=20$ and 40 cases.

\begin{figure}[!htbp]
	\centering
		\includegraphics[width=0.49\linewidth]{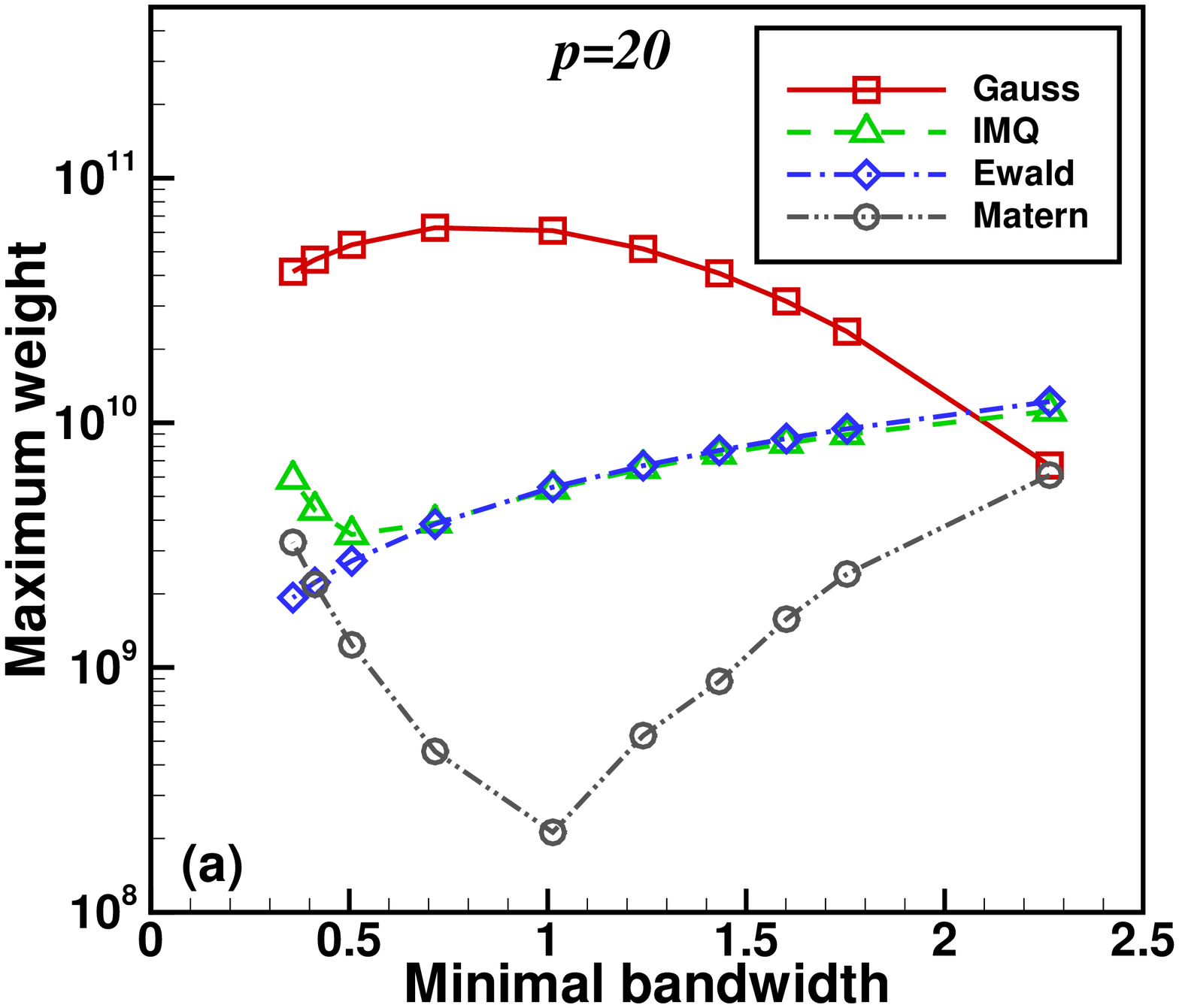}
		\includegraphics[width=0.49\linewidth]{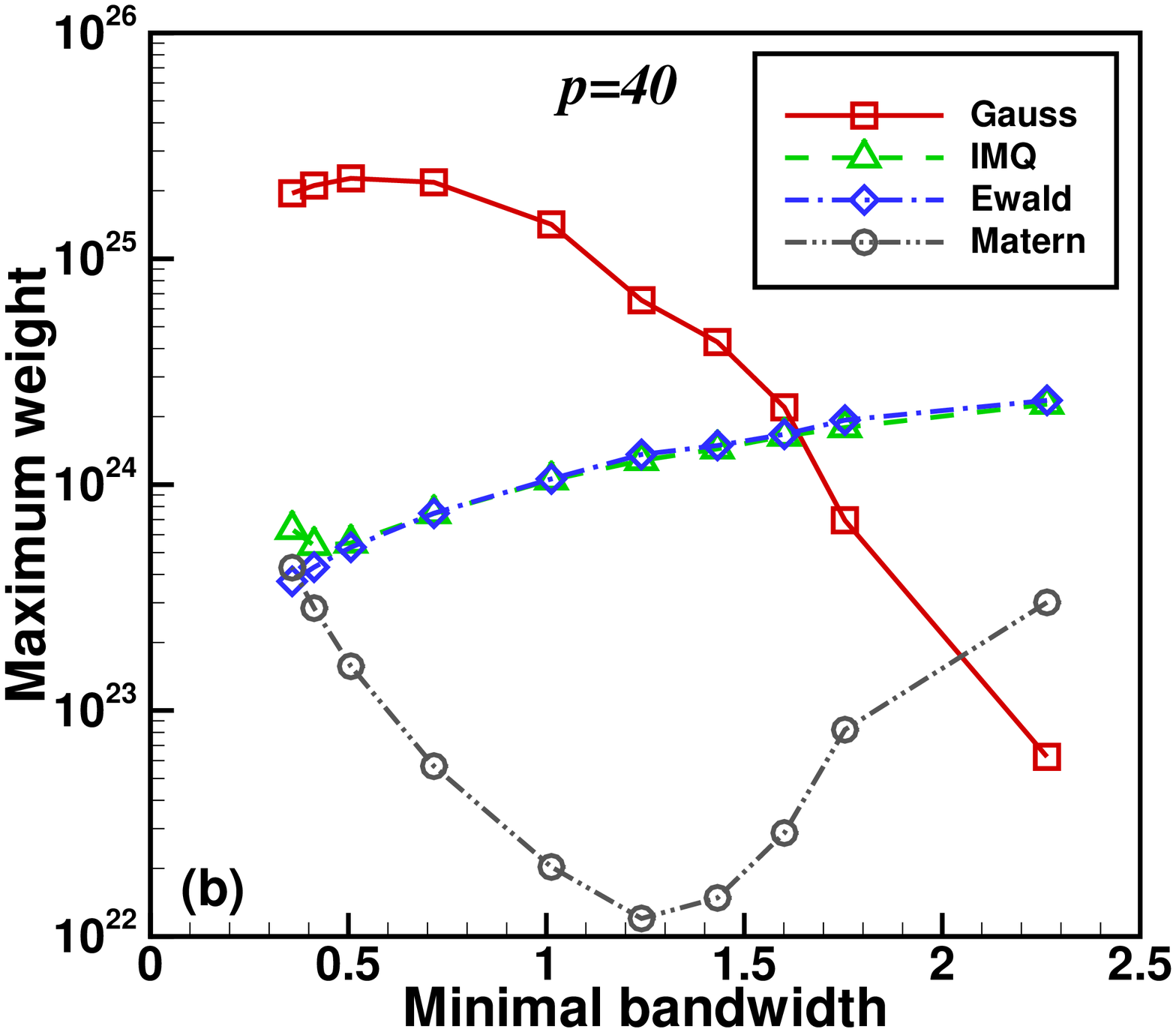}
	\caption{The maximal weight of the SOG approximation $w_\mathrm{max}$ as a function of minimal bandwidth
with two numbers of Gaussians: (a) $p=20$ and (b) $p=40$. Data for four kernels are calculated: small Gaussian,
inverse multiquadratic, Ewald splitting and Mat\'ern.
	}
	\label{TheSecondFigure}
\end{figure}

We next compare the SOG approximation constructed by the VP sum with the SOG by the least squares method (LSM).
For the LSM, we employ the complete orthogonal decomposition to compute the low-rank approximation of the fitting matrix
such that the ill-conditioned matrix can be well resolved. In Figure \ref{TheThirdFigure}(a-d), we show the comparison results
for the four different kernels, where the relative errors as function of $x$ and $x\in[0,1]$ are displayed.
Both the VP sum and the LSM are shown with $p=200$ and $800$.
Clearly, the VP-based SOG approximations provide more accurate results for all the four different kernels, thanks
to the analytical manner of the VP sum.

\begin{figure}[!htbp]
	\centering
		\includegraphics[width=0.49\linewidth]{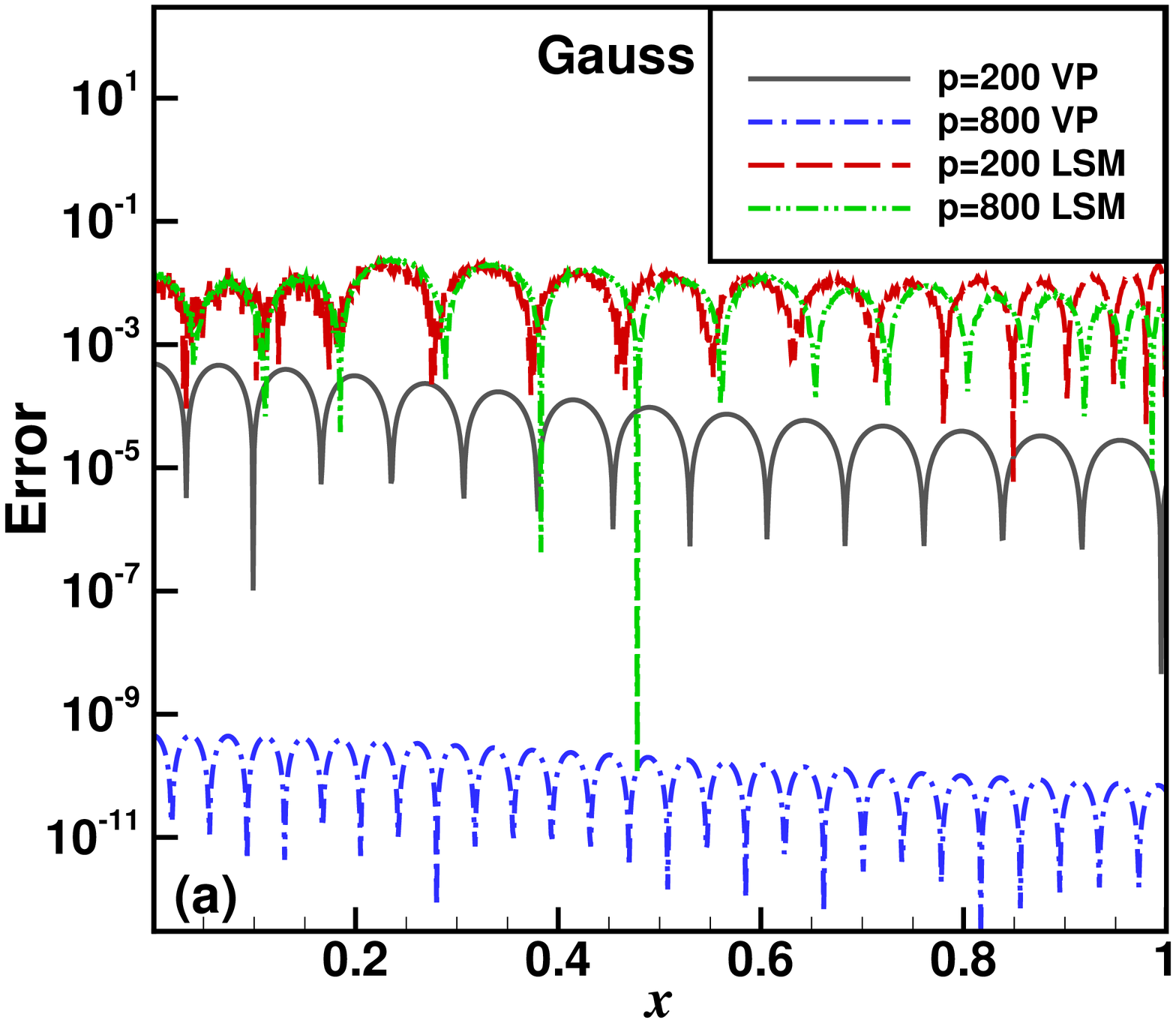}
		\includegraphics[width=0.49\linewidth]{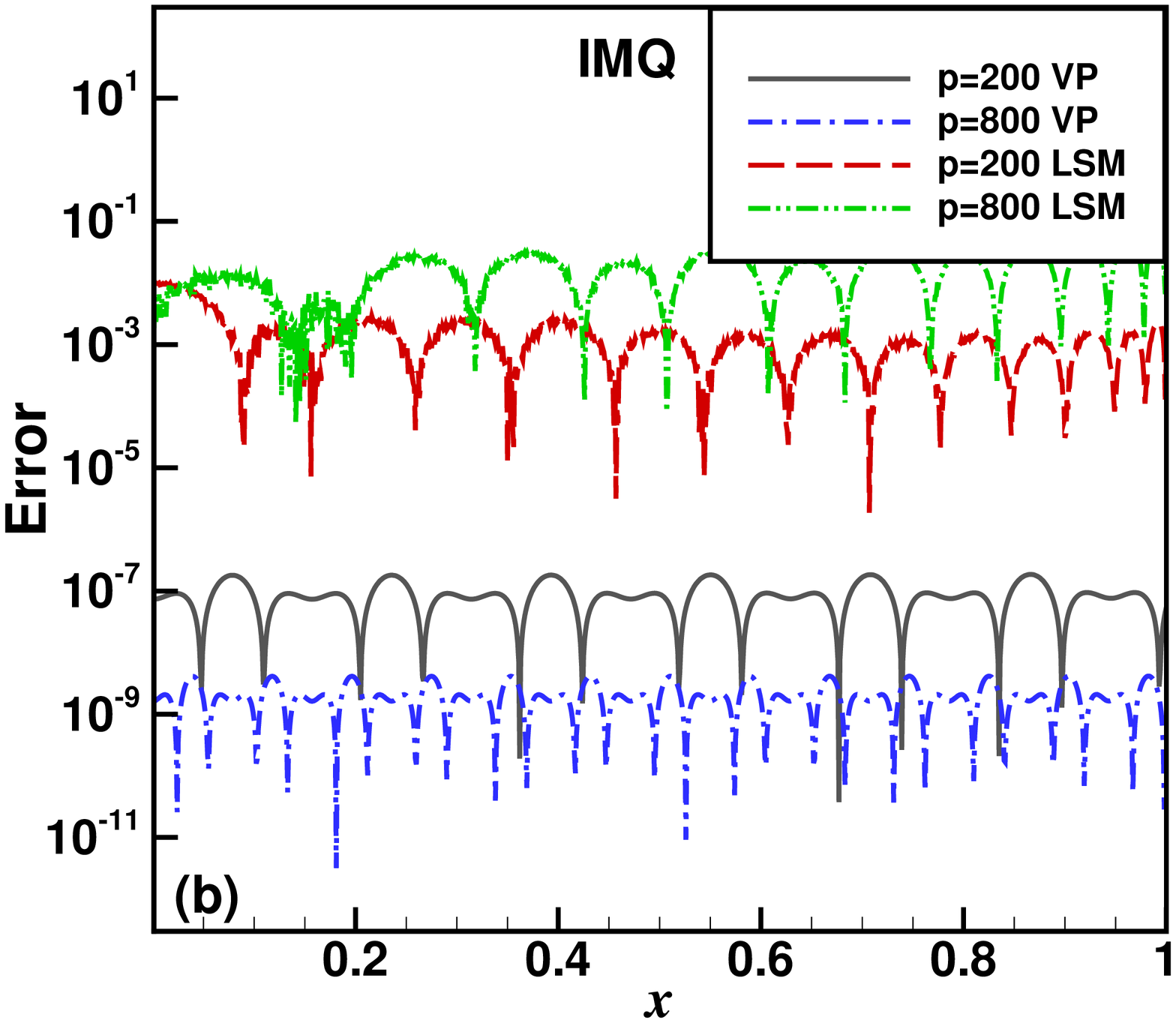}
		\includegraphics[width=0.49\linewidth]{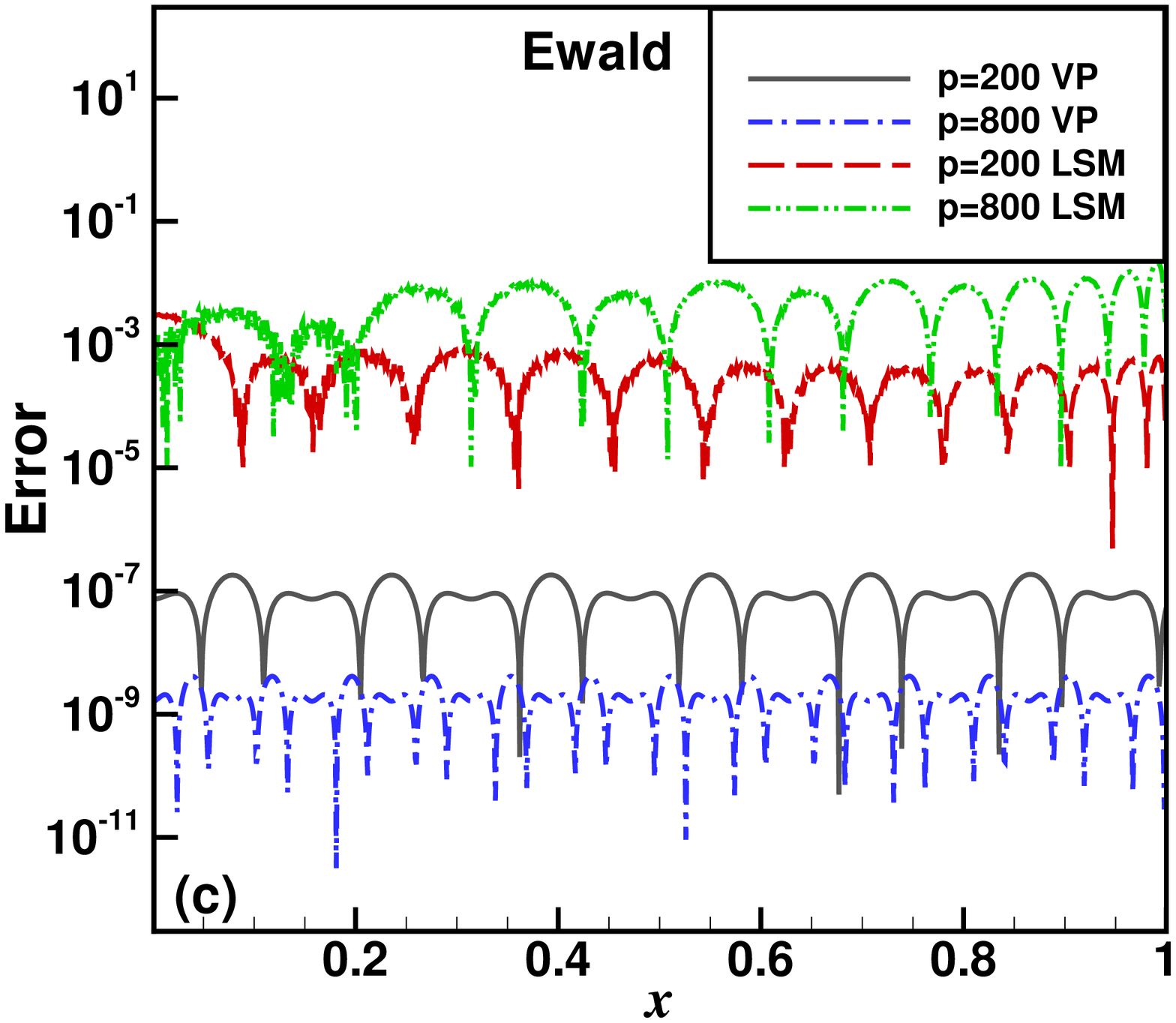}
		\includegraphics[width=0.49\linewidth]{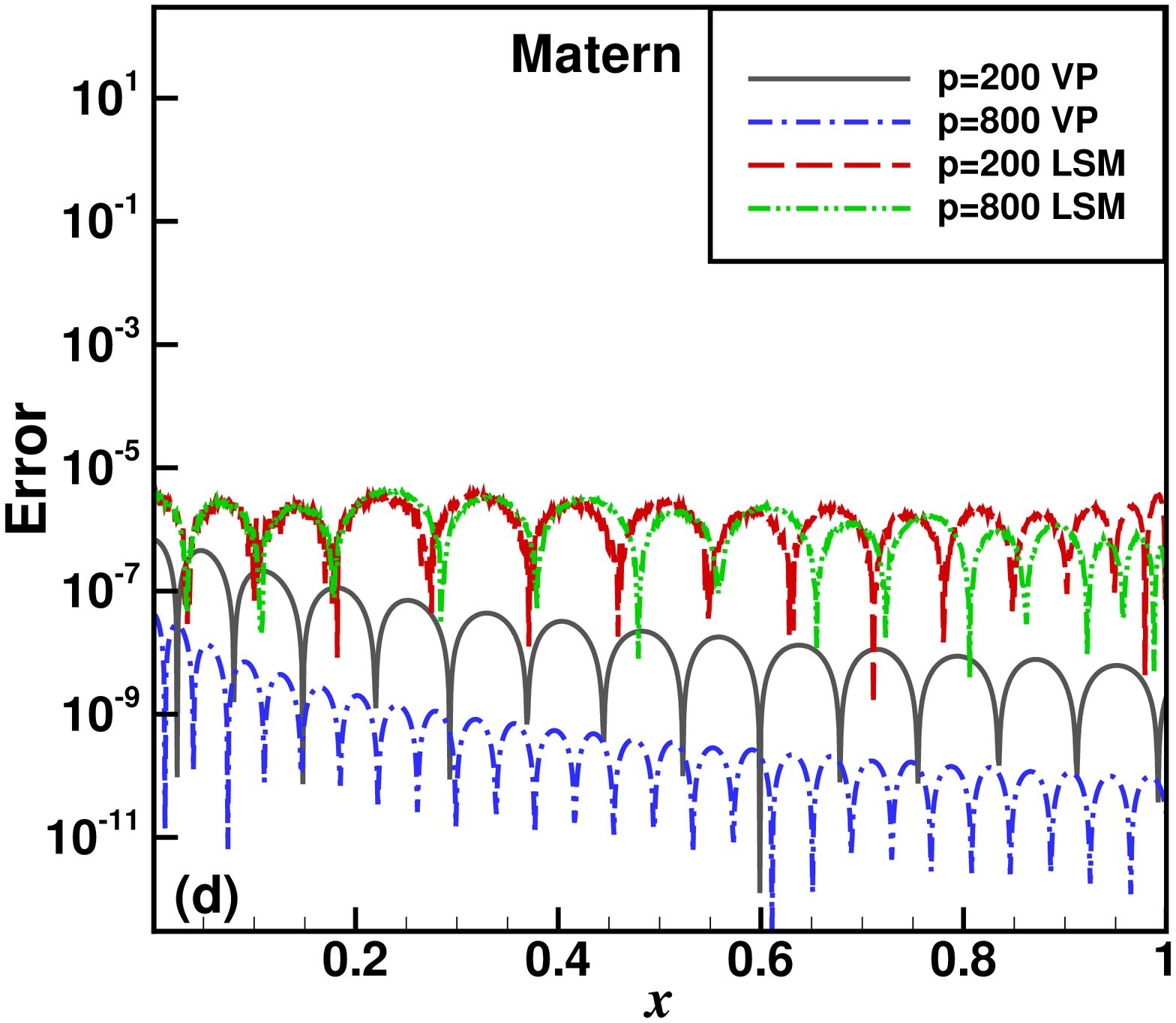}
	\caption{Relative error as function of $x$ for the SOG approximations via VP-sums and LSM for four different kernels:
(a) Gaussian; (b) inverse multiquadric; (c) Ewald splitting; and (d) Mat\'ern. Results of the SOG methods
with 200 and 800 Gaussians are shown.
	}
	\label{TheThirdFigure}
\end{figure}

Finally, we investigate the efficiency of the model reduction method used for the
SOG approximation based on the VP sum. In Tables \ref{tab:InvQuad} and \ref{tab:Matern}, we present
the results with the model reduction of the SOG approximation for the inverse multiquadric kernel and
Mat\'ern kernel where $100$ initial Gaussians (with $n=50$ for the VP sum) are used.
The data of maximum weight $\widetilde{w}_{\max}$, the minimal bandwidth $s_q$, and the maximum relative
error $\epsilon_\infty$ are shown for different reduced numbers of Gaussians $q$.
More than $30\%$ of Gaussians for inverse multiquadratic and $50\%$ of Gaussians for Mat\'ern could be reduced
under the same level of $\varepsilon_\infty$ as the original SOG. Interestingly,
the maximum weight $\widetilde{w}_{\max}$ dramatically becomes small number with the use of the model
reduction, and can be well represented by double-precision numbers. With the decrease of Gaussians,
$\widetilde{w}_{\max}$ will become smaller, and the minimum bandwidth $s_q$ increases
for the inverse multiquadratic kernel and slightly decreases for the Mat\'ern kernel.
These results demonstrate the model reduction method is efficient to obtain the optimized
SOG approximation.

\begin{table}[ht!]
	\centering
	\caption{Model reduction  with $100$ initial Gaussians for $f_\mathrm{imq}$ }
	\label{tab:InvQuad}
	\begin{tabular}{c|ccc}
		\hline\hline
	 Reduced number $q$ & \hspace{1.5cm}$\widetilde{w}_{\max}$\hspace{1cm} & \hspace{1cm}$s_q$\hspace{1cm} & \hspace{1cm} $\epsilon_\infty$ \hspace{1cm} \\ \hline
		$100$ & 5.96e+68 & 0.361 & 2.36e-6  \\
		$90$ & 37.5 & 0.201 & 2.36e-6 \\
		$70$ & 13.7 & 0.346 & 2.66e-6 \\
		$50$ & 6.90 & 0.363 & 2.34e-5 \\
		$30$ & 2.31 & 0.421 & 1.87e-4 \\
		$10$ & 2.31 & 0.665 & 1.03e-2 \\ \hline\hline
	\end{tabular}
\end{table}

\begin{table}[ht!]
	\centering
	\caption{Model reduction with $100$ initial Gaussians for $f_\mathrm{mat}$}
	\label{tab:Matern}
	\begin{tabular}{c|ccc}
		\hline\hline
		Reduced number $q$ & \hspace{1.5cm}$\widetilde{w}_{\max}$\hspace{1cm} & \hspace{1cm}$s_q$\hspace{1cm} & \hspace{1cm} $\epsilon_\infty$ \hspace{1cm}   \\  \hline
  	  $100$ & 5.70e+64 & 0.361 & 3.87e-6  \\
		$90$ & 0.335 & 0.131 & 3.87e-6 \\
		$70$ & 0.467 & 0.122 & 3.88e-6 \\
		$50$ & 0.309 & 0.113 & 3.89e-6 \\
		$30$ & 0.246 & 0.116 & 5.68e-6 \\
		$10$ & 0.274 & 0.153 & 1.84e-5 \\ \hline\hline
	\end{tabular}
\end{table}

\section{Conclusions}\label{conclusions}
We have developed a high-accurate and kernel-independent SOG method for which the minimal bandwidth of Gaussians is controllable. This method is constructed by using the variable substitution and VP sums. The number of Gaussians is further reduced by employing the model reduction via square root approach. Such approximations can be combined with the Hermite expansion \cite{inbook,Hille2002} and the fast algorithms of \cite{Greengard1991The,SpivakThe} to achieve efficient, accurate and robust methods for the fast evaluation of kernel summation and convolution problems, which will be studied in our future work.

%%%% Acknowledgments %%%%%%%%
\section*{Acknowledgement}
The authors acknowledge the financial support from the Natural Science Foundation of China (Grant No. 12071288), the Strategic Priority Research Program of CAS (Grant No. XDA25010403), Shanghai Science and Technology Commission (Grant No. 20JC1414100) and the support from the HPC Center of Shanghai Jiao Tong University. The authors thank Prof. Shidong Jiang for some helpful comments.

%\bibliographystyle{elsart-num} %{unsrt} %{siamplain}
%\bibliography{Jiuyang,ewald}

\end{document}